\theoremstyle{plain}
\newtheorem{thm}{Theorem}
\newtheorem{lem}[thm]{Lemma}
\newtheorem{cor}[thm]{Corollary}
\theoremstyle{remark}
 \newtheorem{rem}[thm]{Remark}
\numberwithin{equation}{section}
\begin{document}

\title[Curvature tensor under the Ricci-Harmonic flow]
{Curvature tensor under the Ricci-Harmonic flow}

\author{Anqiang Zhu, Liang Cheng$^1$}

\address{Anqiang Zhu, Department of Mathematical Sciences, WuHan University,
WuHan 430072, P. R. China} \email{anqiangzhu@yahoo.com.cn}

\address{Liang Cheng, Department of mathematics, Huazhong Normal University, WuHan 430079, P. R. China}
\email{chengliang@gmail.com}

\dedicatory{}
\date{}

\keywords{extended Ricci flow, Harnack inequality, conjugate heat
equation}
 \subjclass{35K15, 35K55,
53A04}

\begin{abstract}
We prove that if the Ricci curvature is uniformly bounded under the
Ricci-Harmonic flow for all times $t\in [0,T)$, then the curvature
tensor has to be uniformly bounded as well.
\end{abstract}

\thanks{$^1$= corresponding author; The research is partially supported by the National Natural Science
Foundation of China 10631020 and SRFDP 20090002110019}

\maketitle
\section{introduction}
The Ricci flow has been a useful tool in the study of geometry
problem. It played an important part in the proof of the Poincare
Conjecture and Geometric Conjecture. Since then, geometric flows
attract more attention. In \cite{B.List}, B.List studied an extended
Ricci flow
$$\label{equation 1}
\left\{
    \begin{array}{ll}
      \frac{\partial }{\partial t}g_{ij}=-2R_{ij}+\alpha\phi_{i}\phi_{j}, & \hbox{$t\in [0,T)$;} \\
      \frac{\partial }{\partial t}\phi=\Delta \phi, & \hbox{$t\in [0,T)$.}
    \end{array}
  \right.
$$
In his dissertation, B.List said this flow has some applications in
general relativity. The nonlocal collapse and monotonicity of some
entropy have been proved. Some gradient estimate and local existence
of the flow are obtained. As long as the Riemannian curvature tensor
is bounded, the flow will exists and the gradient and Hession of
$\phi$ are also bounded, \cite{B.List}. R.Muller also introduced a
new geometric flow, called Ricci-Harmonic flow,
$$\label{equation 1}
\left\{
    \begin{array}{ll}
      \frac{\partial }{\partial t}g_{ij}=-2R_{ij}+\alpha\phi_{i}\phi_{j}, & \hbox{$t\in [0,T)$;} \\
      \frac{\partial }{\partial t}\phi=\tau_{g(t)} \phi, & \hbox{$t\in [0,T)$.}
    \end{array}
  \right.
$$
where $\phi:(M,g(t))\rightarrow (N,h)$ is a map between two
Riemannian manifold, $\tau_{g}\phi=trace \nabla d\phi$. The extended
Ricci flow of B.List is a special case of Ricci-Harmonic flow.
Similar to the extended Ricci flow, this flow will also exist if the
Riemannian curvature tensor is bounded.

On the other hand, there are many results about controlling the
Riemannian curvature tensor under Ricci flow. In \cite{natasa}, by a
blow up argument, N.Susem showed that Ricci curvature uniformly
bounded on $M\times [0,T)$, where $T<\infty$ is enough to control
the norm of Riemannian curvature tensor on closed manifold. Then
L.Ma and L.Cheng improveed the result to the Ricci flow on complete
noncompact manifold. R.Ye \cite{ye} and B.Wang \cite{wang}, by
different arguments, proved that the norm of the Riemannian
curvature tensor can be controlled by the bound of
$\|Rm\|_{\frac{n+2}{2}}=(\int_{0}^{T}\int_{M}|Rm|^{\frac{n+2}{2}}d\mu
dt)^{\frac{2}{n+2}}$. In this paper, we will prove that if the Ricci
curvature is uniformly bounded, the flow can continue.

The following are the main results of this paper.

\begin{thm}\label{Muller}
Let $(M,g(t),\phi(t)),t\in [0,T)$ be a solution to the Ricci
Harmonic flow on a Riemannian manifold. Suppose $T<\infty$  and
Ricci curvature is uniformly bounded under the flow. Then the
Riemannian curvature tensor $|Rm|$ stays unoformly bounded under the
flow.
\end{thm}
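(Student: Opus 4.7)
The strategy is to adapt the blow-up argument of \cite{natasa} to the Ricci-Harmonic setting. Suppose for contradiction that $|Rm|$ is not uniformly bounded on $M\times[0,T)$. Choose a sequence $(x_k,t_k)$ with $t_k\to T$ and
\[
Q_k:=|Rm|(x_k,t_k)=\sup_{M\times[0,t_k]}|Rm|\longrightarrow\infty,
\]
and parabolically rescale by $\tilde g_k(t):=Q_k\,g(t_k+Q_k^{-1}t)$, $\tilde\phi_k(t):=\phi(t_k+Q_k^{-1}t)$. Since the Ricci-Harmonic system is invariant under parabolic rescaling, $(\tilde g_k,\tilde\phi_k)$ is again a solution on $[-t_kQ_k,(T-t_k)Q_k)$, and this interval exhausts $(-\infty,0]$ as $t_k\to T>0$ and $Q_k\to\infty$.

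On the rescaled flow $|Rm|_{\tilde g_k}\le 1+o(1)$ on the whole time interval up to $0$, while
\[
|Ric|_{\tilde g_k}=O(Q_k^{-1})\longrightarrow 0,\qquad |\nabla\tilde\phi_k|^2_{\tilde g_k}=O(Q_k^{-1})\longrightarrow 0,
\]
the second estimate using the a priori bound $|\nabla\phi|\le C$ of List/M\"uller recalled in the introduction. Thus in the limit the harmonic-map contribution to the metric evolution becomes negligible, and one expects a purely static Ricci-flat limit.

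To extract a subsequential smooth limit I would invoke the $\kappa$-noncollapsing theorem for the Ricci-Harmonic flow (proven by M\"uller via monotonicity of his entropy functional); combined with the uniform Ricci bound, this yields an injectivity-radius lower bound $\operatorname{inj}_{\tilde g_k(0)}(x_k)\ge i_0>0$. Shi-type local derivative estimates for $Rm$ under the flow---which must be verified separately, but whose additional coupling terms are controlled by the a priori bounds on $\phi$ and $\nabla\phi$---together with Hamilton's Cheeger-Gromov compactness theorem then produce a smooth pointed limit $(M_\infty,g_\infty(t),\phi_\infty,x_\infty)$ on $(-\infty,0]$. By construction $Ric_{g_\infty}\equiv 0$ and $\phi_\infty$ is constant, so $\partial_t g_\infty\equiv 0$: the limit is a complete, static, Ricci-flat, $\kappa$-noncollapsed Riemannian manifold with $|Rm_{g_\infty}|(x_\infty,0)=1$.

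The final, and hardest, step is to rule out such a limit. The natural route is through M\"uller's analogue of Perelman's reduced volume $\widetilde V(\tau)$: it is monotone non-increasing along the Ricci-Harmonic flow and tends to $1$ as $\tau\to 0^{+}$, so after passing to the blow-up limit one expects $\widetilde V_\infty(\tau)\equiv 1$. For the static Ricci-flat manifold $(M_\infty,g_\infty)$ this forces Euclidean volume growth, whence Bishop-Gromov rigidity yields flatness of $(M_\infty,g_\infty)$---contradicting $|Rm_{g_\infty}|(x_\infty,0)=1$. The technical hurdle is verifying that M\"uller's reduced volume really passes cleanly to Cheeger-Gromov blow-up limits in the Ricci-Harmonic setting; a fallback, should this prove delicate, is to establish a pseudolocality theorem for the flow (in the spirit of Perelman's) and conclude flatness of the limit directly from it. Either route produces the desired contradiction, completing the proof.
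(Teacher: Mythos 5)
Your overall skeleton matches the paper's: blow up at a sequence of almost-maximal curvature points, extract a limit with vanishing (or at least nonnegative) Ricci curvature, show the limit has Euclidean volume ratio, and invoke Bishop--Gromov rigidity to contradict $|Rm|(x_\infty,0)=1$. Two of your intermediate observations are in fact cleaner than the paper's: you get $Ric_{g_\infty}\equiv 0$ and $\phi_\infty$ constant directly from the scaling $|Ric|_{\tilde g_k}=O(Q_k^{-1})$, $|\nabla\tilde\phi_k|^2_{\tilde g_k}=O(Q_k^{-1})$, whereas the paper passes through the evolution equation of $S=R-\alpha|\nabla\phi|^2$ and the strong maximum principle to conclude only $S_{ij}=0$, $\tau_g\phi=0$, hence $Ric=\alpha\,\nabla\phi\otimes\nabla\phi\ge 0$. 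For the compactness step, the paper deliberately avoids your $\kappa$-noncollapsing route: it pulls the metrics back by the exponential map to the ball $B(o_i,\pi/2)$ in $T_{x_i}M_i$, where the curvature bound alone gives $\operatorname{inj}(o_i,\tilde g_i(0))\ge\pi/2$, so no entropy monotonicity is needed and the argument survives on noncompact manifolds, where the entropy approach is problematic.

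The genuine gap is in your final step. You assert that after passing to the blow-up limit ``one expects $\widetilde V_\infty(\tau)\equiv 1$,'' but nothing in your construction forces this: monotonicity only gives that $\widetilde V$ is nonincreasing and bounded above by $1$, and for a general blow-up limit of a (Ricci or Ricci-Harmonic) flow the limiting reduced volume is a constant that may be strictly less than $1$ --- this is exactly what happens when the limit is a nontrivial soliton. Constancy at the value $1$ is the conclusion you are trying to reach (flatness), not a hypothesis you are entitled to. The paper fills precisely this hole with an elementary argument: for fixed $r>0$ the rescaled balls $B_{g(t_i)}(p_i,rQ_i^{-1/2})$ have radii tending to $0$, so one may (i) use the uniform bound on $S$ to compare volumes at time $t_i$ with volumes at a fixed earlier time $t_0<T$ up to a factor $(1-\epsilon)^{n/2}$ (Lemma \ref{volume estimate}), and (ii) apply the classical expansion $\operatorname{Vol}B(p,\rho)=\omega_n\rho^n\bigl(1-\tfrac{R(p)}{6(n+2)}\rho^2+o(\rho^2)\bigr)$ on the fixed smooth manifold $(M,g(t_0))$ to conclude $\operatorname{Vol}B(p,r)/r^n\ge\omega_n$ in the limit. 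You should replace the reduced-volume step with this (or an equivalent) direct volume estimate; as written, that step does not close.
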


In the following, we will prove \ref{Muller}, Since the Extended
Ricci flow is a special case of the Ricci Harmonic flow. Note that
the proof doesn't depend on the compact of the manifold. We just
take a closed manifold for simple.

If the manifold is compact, we also have the following result.
\begin{thm}
Let $(M,g(t),\phi(t)),t\in [0,T)$, where $T<\infty$, be a solution
to the Ricci-Harmonic flow on a closed manifold satisfying
$$
\|R\|_{\frac{n+2}{2}}=(\int_{0}^{T}\int_{M}|R|^{\frac{n+2}{2}}d\mu
dt)<\infty,
$$
and
$$
\|W\|_{\frac{n+2}{2}}=(\int_{0}^{T}\int_{M}|W|^{\frac{n+2}{2}}d\mu
dt)<\infty,
$$
\end{thm}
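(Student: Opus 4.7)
My plan is to adapt the Moser-iteration argument of R.~Ye and B.~Wang from the Ricci flow to the Ricci-Harmonic flow. The starting point is the pointwise orthogonal decomposition
\[
|Rm|^{2} = |W|^{2} + \tfrac{4}{n-2}|\mathring{Ric}|^{2} + \tfrac{2}{n(n-1)}R^{2},
\]
where $\mathring{Ric}$ is the traceless Ricci tensor. Together with the hypothesized $L^{(n+2)/2}$ bounds on $R$ and $W$, this reduces the problem to producing a uniform bound on $|\mathring{Ric}|$ (or equivalently on $|Rm|$) via the iteration I set up next.

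I would first derive the standard evolution inequality
\[
(\partial_{t} - \Delta) |Rm|^{2} \leq -2|\nabla Rm|^{2} + C|Rm|^{3} + C\bigl(|\nabla\phi|^{2}|Rm|^{2} + |\nabla\phi|^{4}\bigr)
\]
under the Ricci-Harmonic flow, using M\"uller's Bianchi-type identities. Multiplying by $|Rm|^{p-2}$, integrating over $M \times [t_{0},T)$, and substituting the decomposition in the cubic term, the critical right-hand-side piece becomes
\[
C \int_{t_{0}}^{T} \int_{M} \bigl(|W| + |R| + |\mathring{Ric}|\bigr)\, |Rm|^{p}\, d\mu\, dt.
\]
The $W$ and $R$ contributions are estimated by H\"older as $\bigl(\|W\|_{(n+2)/2} + \|R\|_{(n+2)/2}\bigr) \cdot \|\,|Rm|^{p/2}\,\|_{L^{2(n+2)/n}(M \times [t_{0},T))}$, and the last factor is controlled by the parabolic Sobolev embedding against the gradient term on the left, provided $\|W\|_{(n+2)/2} + \|R\|_{(n+2)/2}$ is small on $[t_{0},T)$. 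Absolute continuity of the hypothesized finite integrals makes this smallness hold on sufficiently short intervals, so iterating $p \mapsto p(n+2)/n$ yields $\|Rm\|_{L^{\infty}(M \times [t_{0},T))} < \infty$.

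Two ingredients are special to the Ricci-Harmonic setting. For the parabolic Sobolev embedding to be usable I need a uniform Sobolev constant along $g(t)$, which I would obtain from M\"uller's monotone entropy for the Ricci-Harmonic flow, giving $\kappa$-non-collapsing exactly as Perelman's entropy does for the pure Ricci flow. The $|\nabla\phi|$ contributions are handled by the maximum-principle gradient bound of List and M\"uller, valid as long as $|Rm|$ is bounded on $[0,t_{0}]$, so the corresponding spacetime integrals can be absorbed with Young's inequality.

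The main obstacle I anticipate is closing the bootstrap on $|\mathring{Ric}|$, since it is not directly assumed small in $L^{(n+2)/2}$ yet appears in the critical cross-term $\int |\mathring{Ric}|\, |Rm|^{p}$. To handle this I would set up a parallel Moser iteration on $|\mathring{Ric}|^{2}$ itself, whose Weitzenb\"ock-type evolution equation under the Ricci-Harmonic flow is driven by $W$ and $R$ (plus $\nabla\phi$ terms) as source terms. The same $L^{(n+2)/2}$-smallness then shows that $\|\mathring{Ric}\|_{(n+2)/2}$ is controlled by the hypothesized norms together with initial data, closing the loop and completing the iteration.
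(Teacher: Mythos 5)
Your route is genuinely different from the paper's, and it has a gap at exactly the point you flag as the main obstacle. The paper disposes of this theorem in two lines by reusing its blow-up machinery: since $M$ is closed, $\sup_M|\nabla\phi|^2$ is controlled (List, Theorem 8.6), so under the rescaling $g_i=Q_ig(t_i+t/Q_i)$ the map $\phi$ becomes constant and the limit is an ancient solution of the plain Ricci flow; because the spacetime norm $\int_0^T\int_M|R|^{(n+2)/2}d\mu\,dt$ (and likewise for $W$) is scale invariant and the rescaled flows occupy spacetime regions of vanishing measure in the original flow, absolute continuity forces $R\equiv 0$ and $W\equiv 0$ on the limit; the evolution equation for the scalar curvature then gives $Ric\equiv 0$ by the strong maximum principle, and $W=Ric=0$ makes the limit flat, contradicting the normalization $|Rm|(o,0)=1$. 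This is the Ma--Cheng argument the paper cites; it never needs an integral iteration, and it kills the traceless Ricci part ``for free'' via the reaction term in the scalar curvature equation.

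Your Moser iteration, by contrast, does not close as sketched. The hypotheses control only $W$ and $R$ in $L^{(n+2)/2}$; the traceless Ricci piece is the missing third of the decomposition, and the critical term $\int|\mathring{Ric}|\,|Rm|^p$ cannot be absorbed by smallness of $\|W\|_{(n+2)/2}+\|R\|_{(n+2)/2}$. The proposed fix --- a parallel iteration on $|\mathring{Ric}|^2$ whose evolution is ``driven by $W$ and $R$ as source terms'' --- rests on a false premise: the reaction term $2R_{ikjl}R^{kl}$ in the evolution of the Ricci tensor contributes, after decomposing $Rm$ into Weyl and Schouten parts, terms \emph{quadratic} in $\mathring{Ric}$, so $(\partial_t-\Delta)|\mathring{Ric}|^2$ contains a $|\mathring{Ric}|^3$ term and the iteration again demands smallness of $\|\mathring{Ric}\|_{(n+2)/2}$ --- precisely the quantity you are trying to bound. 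Closing this would require an additional continuity/bootstrap step (e.g.\ shrinking $t_1-t_0$ to make $\|\mathring{Ric}\|_{L^{(n+2)/2}([t_0,t_1]\times M)}$ small, with constants tracked uniformly as $t_1\to T$), which is not supplied and is delicate since the smallness threshold depends on the very sup bound being produced. Either carry out that bootstrap carefully, or switch to the blow-up argument, which sidesteps the issue entirely.
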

Then $\sup_{M\times [0,T)}\|Rm\|<\infty$.

In the following, we denote $S_{ij}=R_{ij}-\alpha \phi_{i}\phi_{j}$,
$S=R-\alpha|\nabla \phi|^{2}$.

\section{Preliminary}
\begin{lem}\label{Gradient estimate}
Under the same condition of the theorem \ref{Muller}, we have
$$
|\nabla \phi|(t)<C, t\in [0,T).
$$
where $C$ is independent of $t$.
\end{lem}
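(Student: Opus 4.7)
The plan is to apply the maximum principle to the evolution equation of $u=|\nabla\phi|^2$. The key structural feature to exploit is that the coefficient $-2R_{ij}$ in the metric flow $\partial_{t}g_{ij}=-2R_{ij}+\alpha\phi_{i}\phi_{j}$ is chosen precisely to cancel the Ricci contribution that appears in the Bochner identity for $\Delta|\nabla\phi|^{2}$; once that cancellation is carried out, the resulting evolution equation does not explicitly involve the Ricci curvature, and the bound follows almost mechanically.

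Concretely, I would first compute $(\partial_{t}-\Delta)|\nabla\phi|^{2}$ by writing
\[
\partial_{t}|\nabla\phi|^{2}=(\partial_{t}g^{ij})\phi_{i}\phi_{j}+2\langle\nabla\phi,\nabla(\tau_{g}\phi)\rangle,
\]
using $\partial_{t}g^{ij}=2R^{ij}-\alpha\phi^{i}\phi^{j}$, and then applying the Bochner formula for a map $\phi:(M,g)\to(N,h)$:
\[
\tfrac{1}{2}\Delta|\nabla\phi|^{2}=|\nabla d\phi|^{2}+\langle\nabla\phi,\nabla\tau_{g}\phi\rangle+R^{ij}\phi_{i}\phi_{j}-\mathrm{Rm}^{N}(\phi)(\phi_{i},\phi_{j},\phi^{i},\phi^{j}).
\]
The $R^{ij}\phi_{i}\phi_{j}$ terms cancel and one arrives at
\[
(\partial_{t}-\Delta)|\nabla\phi|^{2}=-2|\nabla d\phi|^{2}-\alpha|\nabla\phi|^{4}+2\,\mathrm{Rm}^{N}(\phi)(\phi_{i},\phi_{j},\phi^{i},\phi^{j}).
\]
For $\alpha\ge 0$ the first two terms on the right are non-positive, so the sign of the right-hand side is governed entirely by the target curvature term.

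The main obstacle is thus controlling the target-curvature contribution. If $(N,h)$ has non-positive sectional curvature, that term is also non-positive and the weak maximum principle immediately yields $\sup_{M}|\nabla\phi|^{2}(t)\le\sup_{M}|\nabla\phi|^{2}(0)$ for all $t\in[0,T)$, which is the desired $C$. For a general target, one uses energy monotonicity (which is straightforward to establish under the standing Ricci bound of Theorem \ref{Muller}) to keep $\phi(M)$ inside a fixed compact subset of $N$, so that $|\mathrm{Rm}^{N}(\phi)|\le K$ uniformly along the flow. One is then left with a scalar differential inequality of the form $(\partial_{t}-\Delta)u\le 2Ku^{2}$ with $u=|\nabla\phi|^{2}$ on the finite interval $[0,T)$, and an ODE comparison via the maximum principle produces a bound depending only on $T$, $K$, and $\sup_{M}u(0)$. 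The Ricci bound enters only to guarantee (uniform equivalence of the metrics and) the boundedness of $\phi(M)$; the heart of the argument is the cancellation in the Bochner identity above.
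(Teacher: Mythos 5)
Your Bochner computation is essentially correct, and for a flat or non-positively curved target (e.g.\ List's scalar case) it does give the bound: $\sup_M|\nabla\phi|^2$ is then monotone non-increasing, with no curvature hypothesis on $(M,g(t))$ needed at all. The gap is in the general case. First, the reduction to $(\partial_t-\Delta)u\le 2Ku^2$ does not close: the comparison ODE $y'=2Ky^2$ blows up at time $(2Ky_0)^{-1}$, so the maximum principle yields a bound only on a short initial interval, and no bound ``depending only on $T$, $K$ and $\sup_M u(0)$'' follows from a Riccati-type inequality on an arbitrary finite interval $[0,T)$. The good term $-2\alpha|\nabla\phi\otimes\nabla\phi|^2\le-\tfrac{2\alpha}{n}|\nabla\phi|^4$ can absorb the target-curvature term only when $\alpha$ is large compared with $K$, which is not assumed. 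Second, the claim that energy monotonicity keeps $\phi(M)$ inside a fixed compact subset of $N$ is unjustified when $N$ is noncompact: energy is an integral quantity and does not control the image pointwise.

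The paper avoids all of this by applying the minimum principle not to $|\nabla\phi|^2$ but to $S=R-\alpha|\nabla\phi|^2$, for which M\"uller's computation gives the exact identity $(\partial_t-\Delta)S=2|S_{ij}|^2+2\alpha|\tau_g\phi|^2\ge 0$, with no target-curvature term surviving. Hence $S\ge\min_M S(\cdot,0)$ for all $t$, and since the Ricci bound controls $R$ uniformly, $\alpha|\nabla\phi|^2=R-S$ is uniformly bounded (here $\alpha>0$ is used). Note that in your computation the Ricci terms cancel, so the standing hypothesis of Theorem \ref{Muller} is never used where it is actually needed; in the paper's argument the Ricci (hence scalar curvature) bound is exactly the input that converts the lower bound on $S$ into an upper bound on $|\nabla\phi|^2$. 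To salvage your route you would need either a non-positivity assumption on the sectional curvature of the target or to switch to the quantity $S$.
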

\begin{proof}
Since $Ric $ curvature is uniformly bounded along the flow at $t\in
[0,T)$, the Scale curvature is also uniformly bounded. From
\cite{Muller}, we have
\begin{eqnarray*}
(\partial_{t}-\Delta)S=2|S_{ij}|^{2}+2\alpha |\tau_{g}\phi(t)|^{2}.
\end{eqnarray*}
So the minimum of $S(t)$ is nondecreasing along the flow.
$$
S(t)=R-\alpha |\nabla \phi|^{2}\geq C.
$$
Since $R$ is uniformly bounded, $|\nabla \phi(t)|^{2}\leq C$, where
$C$ is independent of $t$.

\end{proof}

Since $Ric$ curvature and $|\nabla \phi(t)|^{2}$ is uniformly
bouhnded, we will control the variation of the distance along the
Extended Ricci flow.
\begin{lem}\cite{B.List},\cite{W}
Under the same condition of theorem \ref{Muller}. For all
$\delta>0$, there exists an $\eta>0$, such that if
$|t-t_{0}|<\eta,t,t_{0}\in [0,T)$, then
$$
|d_{g(t)}(q,q^{'})-d_{g(t_{0})}(q,q^{'})|\leq \delta
d_{g(t_{0})}(q,q^{'})
$$
for all $q,q^{'}\in M$.
\end{lem}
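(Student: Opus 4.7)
The plan is to reduce the distance comparison to a pointwise comparison of the metrics $g(t)$ and $g(t_0)$, and then to a standard ODE argument. Under the Ricci-Harmonic flow the metric evolves by $\partial_t g_{ij}=-2R_{ij}+\alpha\phi_i\phi_j$. By hypothesis $|\mathrm{Ric}|\le K$ uniformly on $[0,T)$, and Lemma \ref{Gradient estimate} gives $|\nabla\phi|\le C'$ uniformly as well. Consequently, as symmetric $2$-tensors,
\[
-Cg_{ij}\le \partial_t g_{ij}\le Cg_{ij}
\]
for some constant $C$ independent of $t$ and of the point.

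From this I would derive a pointwise metric comparison. Fixing a point $p\in M$ and a tangent vector $v\in T_pM$, the scalar function $f(t):=g(t)(v,v)$ satisfies $|f'(t)|\le Cf(t)$, so Gronwall yields
\[
e^{-C|t-t_0|}\,g(t_0)(v,v)\le g(t)(v,v)\le e^{C|t-t_0|}\,g(t_0)(v,v).
\]
Transferring this to lengths of arbitrary piecewise smooth curves $\gamma$ from $q$ to $q'$ gives
\[
e^{-C|t-t_0|/2}L_{g(t_0)}(\gamma)\le L_{g(t)}(\gamma)\le e^{C|t-t_0|/2}L_{g(t_0)}(\gamma),
\]
and taking the infimum over all such $\gamma$ produces the same two-sided bound for the distance $d_{g(t)}(q,q')$ in terms of $d_{g(t_0)}(q,q')$.

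Finally, given $\delta>0$ I would choose $\eta>0$ so small that $e^{C\eta/2}-1\le\delta$; the same choice automatically gives $1-e^{-C\eta/2}\le\delta$. Then for $|t-t_0|<\eta$ the displayed inequality of the lemma follows immediately from the previous step, uniformly in $q,q'\in M$.

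There is no real obstacle here: the only thing that has to be checked carefully is that the constant $C$ in $|\partial_t g|\le Cg$ is truly uniform in space and time, and this is exactly what the hypothesis of Theorem \ref{Muller} together with Lemma \ref{Gradient estimate} provides. The entire argument is an ODE comparison plus the definition of the Riemannian distance; the completeness or compactness of $M$ plays no role beyond ensuring that distances are realized (or approximated) by curves, so the statement is valid in the noncompact setting as well.
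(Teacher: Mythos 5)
Your argument is correct and is essentially the same as the paper's: both reduce the claim to the two-sided pointwise bound $e^{-C|t-t_0|}g(t_0)(v,v)\le g(t)(v,v)\le e^{C|t-t_0|}g(t_0)(v,v)$ obtained from the uniform Ricci bound plus the gradient estimate for $\phi$, transfer it to lengths of curves, and then pick $\eta$ with $e^{C\eta/2}-1\le\delta$. The only cosmetic difference is that you take an infimum over all competitor curves while the paper compares lengths of the two minimal geodesics (for $g(t_0)$ and for $g(t)$); the conclusions are identical.
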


\begin{proof}
\begin{eqnarray*}
|\log g(t)(V,V)-\log
g(t_{0})(V,V)|&=&|\int_{t_{0}}^{t}\frac{\partial}{\partial
t}\log g(s)(V,V)ds|\\
&=&|\int_{t_{0}}^{t}\frac{(-2Ric(V,V)+2\alpha <\nabla
\phi,V>^{2})}{|V|_{g(s)}^{2}}ds|\\
&\leq &\int_{t_{0}}^{t}\frac{2C|V|^{2}_{g_{s}}+2\alpha|\nabla
\phi|_{g(s)}^{2}|V|_{g(s)}^{2}}{|V|^{2}_{g_{s}}}ds\\
&\leq &C(t-t_{0})
\end{eqnarray*}

So we have
$$
e^{-C|t-t_{0}|}|V|^{2}_{g(t_{0})}\leq |V|^{2}_{g(t)}\leq
e^{C|t-t_{0}|}|V|^{2}_{g(t_{0})}
$$

Suppose $\gamma_{1}$ is a minimal geodesic from $p$ to $p^{'}$ with
respect to metric $g(t_{0})$.
$$
d_{g(t)}(p,p^{'})\leq \int |\dot{\gamma_{1}}|_{g(t)}(s)ds\leq \int
e^{C|t-t_{0}|/2}|\dot{\gamma_{1}}|_{g(t_{0})}(s)ds=e^{C|t-t_{0}|/2}d_{g(t_{0})}(p,p^{'}).
$$
Similarly, Let $\gamma_{2}$ be a minimal geodesic from $p$ to
$p^{'}$ with respect to metric $g(t)$. Then
$$
d_{g(t)}(p,p^{'})= \int |\dot{\gamma_{2}}|_{g(t)}(s)ds\geq \int
e^{-C|t-t_{0}|/2}|\dot{\gamma_{2}}|_{g(t_{0})}(s)ds\geq
e^{-C|t-t_{0}|/2}d_{g(t_{0})}(p,p^{'}).
$$

$$
|d_{g(t)}(p,p^{'})-d_{g(t_{0})}(p,p^{'})|\leq
(e^{C|t-t_{0}|/2}-1)d_{g_{t_{0}}}(p,p^{'}).
$$
The lemma will be satisfied if we take $\eta=\frac{2\ln
(\delta+1)}{C}$.
\end{proof}

\begin{cor}\label{distance estimate}
Under the condition of theorem \ref{Muller}. For all $\rho>0$,
$$
B_{g(t)}(0,r(t)\rho)\subset B_{g(0)}(0,\rho),
$$
$$
B_{g(0)}(0,r(t)\rho)\subset B_{g(t)}(0,\rho),
$$
where $r(t)=e^{-Ct/2}$.
\end{cor}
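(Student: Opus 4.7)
The plan is to derive this corollary as a direct consequence of the bilateral distance comparison established in the proof of the preceding lemma. That proof actually gives the pointwise bound
$$e^{-C|t-t_{0}|/2}\,d_{g(t_{0})}(p,p')\leq d_{g(t)}(p,p')\leq e^{C|t-t_{0}|/2}\,d_{g(t_{0})}(p,p'),$$
so I would first note this stronger conclusion explicitly and then specialize to $t_{0}=0$, taking $p$ to be the reference origin $0$.

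With $t_{0}=0$ fixed, the inequality becomes $e^{-Ct/2}\,d_{g(0)}(0,q)\leq d_{g(t)}(0,q)\leq e^{Ct/2}\,d_{g(0)}(0,q)$ for every $q\in M$. For the first inclusion, suppose $q\in B_{g(t)}(0,r(t)\rho)$, i.e.\ $d_{g(t)}(0,q)<e^{-Ct/2}\rho$. Dividing by $e^{-Ct/2}$ in the left inequality yields $d_{g(0)}(0,q)\leq e^{Ct/2}d_{g(t)}(0,q)<\rho$, so $q\in B_{g(0)}(0,\rho)$. For the second inclusion, suppose $q\in B_{g(0)}(0,r(t)\rho)$, i.e.\ $d_{g(0)}(0,q)<e^{-Ct/2}\rho$. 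The right inequality then gives $d_{g(t)}(0,q)\leq e^{Ct/2}d_{g(0)}(0,q)<\rho$, so $q\in B_{g(t)}(0,\rho)$. Each inclusion is thus a one-line consequence.

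Since Lemma~\ref{Gradient estimate} supplies the uniform bound $|\nabla\phi|\leq C$ that feeds into the metric equivalence in the previous lemma, and the bound on Ricci curvature is a hypothesis of Theorem~\ref{Muller}, both ingredients are already in hand. There is essentially no obstacle here; the only small point to mention is that the lemma is stated in terms of an $\eta$--$\delta$ continuity, but its proof yields the multiplicative inequality $d_{g(t)}(p,p')\leq e^{C|t-t_{0}|/2}d_{g(t_{0})}(p,p')$ for all $t,t_{0}$, without any smallness assumption on $|t-t_{0}|$, and it is this global form that is being invoked.
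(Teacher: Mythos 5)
Your proposal is correct and follows essentially the same route as the paper: the paper's own proof likewise invokes the multiplicative bound $d_{g(t)}(o,x)\geq e^{-Ct/2}d_{g(0)}(o,x)$ from the preceding lemma's proof (with $t_{0}=0$) and unwinds the definitions of the two balls. Your explicit remark that one is really using the global two-sided inequality established inside that proof, rather than the $\eta$--$\delta$ statement of the lemma itself, is a fair and accurate clarification of what the paper leaves implicit.
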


\begin{proof}
Suppose $x\in B_{g(t)}(o,r(t)\rho)$. Then $r(t)\rho\geq
d_{g(t)}(o,x)\geq e^{-Ct/2}d_{g(0)}(o,x)$. So $d_{g(0)}(o,x)\leq
\rho$. Similarly for $B_{g(0)}(0,r(t)\rho)\subset B_{g(t)}(0,\rho)$.
\end{proof}

\begin{lem}\label{volume estimate}
Under the same condition of theorem \ref{Muller}. For all
$\epsilon>0$, there exists an $\delta (\epsilon,C)>0$, such that if
$t\in [t_{0},t_{0}+\delta]$ then
$$
Vol_{g(t)} B_{g(t)}(x,r)\geq
(1-\epsilon)^{\frac{n}{2}}Vol_{g(t_{0})}B_{g(t_{0})}(x,\frac{r}{1+\epsilon}).
$$
\end{lem}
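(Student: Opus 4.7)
The plan is to combine the distance comparison from the preceding lemma with a pointwise lower bound on the volume form. Once both are in hand, the estimate follows by a single integration over a ball at time $t_0$ that fits inside the ball at time $t$.

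First I would apply the distance comparison lemma above (with its parameter taken equal to $\epsilon$) to obtain $\eta(\epsilon,C)>0$ such that for $|t-t_0|<\eta$ one has $d_{g(t)}(p,q)\le (1+\epsilon)\,d_{g(t_0)}(p,q)$ for all $p,q\in M$; consequently
$$B_{g(t_0)}\bigl(x,\tfrac{r}{1+\epsilon}\bigr)\subset B_{g(t)}(x,r).$$
Next, from $\partial_t g_{ij}=-2R_{ij}+\alpha\phi_i\phi_j$ together with the uniform bounds on Ricci (hypothesis) and on $|\nabla\phi|^2$ (Lemma \ref{Gradient estimate}), the same computation used in the proof of the preceding lemma yields the pointwise metric equivalence
$$e^{-C(t-t_0)}g(t_0)\leq g(t)\leq e^{C(t-t_0)}g(t_0)$$
as bilinear forms on each tangent space. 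Taking determinants and square roots gives
$$d\mu_{g(t)}\geq e^{-nC(t-t_0)/2}\,d\mu_{g(t_0)}.$$
Integrating this pointwise bound over $B_{g(t_0)}(x,r/(1+\epsilon))$ and using the ball containment,
$$Vol_{g(t)}B_{g(t)}(x,r)\;\geq\;\int_{B_{g(t_0)}(x,r/(1+\epsilon))}d\mu_{g(t)}\;\geq\;e^{-nC(t-t_0)/2}\,Vol_{g(t_0)}B_{g(t_0)}\bigl(x,\tfrac{r}{1+\epsilon}\bigr).$$

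To finish I would choose $\delta(\epsilon,C)>0$ so small that simultaneously $\delta\leq\eta$ (preserving the ball containment of step one) and $e^{-nC\delta/2}\geq(1-\epsilon)^{n/2}$ (matching the stated prefactor). Both requirements reduce to $\delta\leq\min\{\eta,\,\ln(1/(1-\epsilon))/C\}$, a quantity depending only on $\epsilon$ and on the constant $C$ from the Ricci and $|\nabla\phi|$ bounds. There is no essential obstacle here; the argument is a bookkeeping step combining two estimates already at hand. The only care required is coordinating the choice of $\delta$ so that the distance bound (giving the $1+\epsilon$ in the radius) and the volume-form bound (giving the $(1-\epsilon)^{n/2}$ prefactor) both fire on the same time interval with matching constants.
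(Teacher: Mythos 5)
Your proof is correct and follows essentially the same route as the paper: ball containment from the distance-comparison lemma, an exponential lower bound on the volume over the fixed smaller ball, and a choice of $\delta$ making both estimates hold simultaneously. The only immaterial difference is that the paper derives the volume bound from the evolution $\frac{d}{dt}d\mu=-S\,d\mu$ with $|S|\le C$ (giving $e^{-C(t-t_0)}$), whereas you take determinants of the pointwise metric equivalence (giving $e^{-nC(t-t_0)/2}$); either factor is absorbed by shrinking $\delta$.
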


\begin{proof}
By the corollary \ref{distance estimate}, for all $\epsilon>0$,
there exists $\delta_{1}>0$, such that if $t\in
[t_{0},t_{0}+\delta_{1}]$, we have
$$
B_{g(t_{0})}(x,\frac{r}{1+\epsilon})\subset B_{g(t)}(x,r).
$$

Since the Ricci curvature is uniformly bounded in $[0,T)$, by lemma
\ref{Gradient estimate}, we have $|S|<C$ is uniformly bounded.
\begin{eqnarray*}
\frac{d}{dt}\int_{B_{g(t_{0})}(x,\frac{r}{1+\epsilon}
)}dv=\int_{B_{g(t_{0})}(x,\frac{r}{1+\epsilon})}-S dv
\end{eqnarray*}
i.e.
$$
|\frac{d}{dt}Vol_{g(t)}B_{g(t_{0})}(x,\frac{r}{1+\epsilon})|<C
Vol_{g(t)}B_{g(t_{0})}(x,\frac{r}{1+\epsilon}).
$$
\end{proof}
We have
$$
Vol_{g(t)}B_{g(t_{0})}(x,\frac{r}{1+\epsilon})\geq
e^{-C(t-t_{0})}Vol_{g(t_{0})}B_{g(t_{0})}(x,\frac{r}{1+\epsilon})
$$
There exists a $\delta_{2}>0$, such that $e^{-C(t-t_{0})}\geq
(1-\epsilon)^{\frac{n}{2}}$, if $t\in [t_{0},t_{0}+\delta_{2}]$.

Take $\delta=\min\{\delta_{1},\delta_{2}\}$, for $t\in
[t_{0},t_{0}+\delta]$
$$
Vol_{g(t)} B_{g(t)}(x,r)\geq
(1-\epsilon)^{\frac{n}{2}}Vol_{g(t_{0})}B_{g(t_{0})}(x,\frac{r}{1+\epsilon}).
$$

\begin{lem}\label{compacness}
Let $\{(M_{i}^{n},g_{i}(t),\phi_{i}(t),x_{i})\}_{i=1}^{\infty},t\in
[0,T]$ be a sequence of the Ricci-Harmonic flow on complete
manifolds such that $\sup_{M_{i}\times
[0,T]}|Rm(g_{i}(t))|_{g_{i}(t)}\leq 1$, $\sup_{x\in
M}|\phi_{i}(x,0)|<C$ where $C$ is independent of $i$. Let
$\psi_{i}=\exp_{x_{i},g_{i}(0)}$ be the exponential map with respect
to metric $g_{i}(0)$ and $B(o_{i},\frac{\pi}{2})\subset
T_{x_{i}}M_{i}$ equipped with metric $\tilde{g_{i}}(t)\doteq
\psi_{i}^{*}g_{i}(t), \tilde{\phi_{i}}=\phi_{i}\circ \psi_{i}$. Then
$(B(o_{i},\frac{\pi}{2}), \tilde{g_{i}}(t),\tilde{\phi_{i}}(t),
o_{i})$ subconverges to a Ricci-Harmonic flow
$(B(o,\frac{\pi}{2}),\tilde{g}(t),\tilde{\phi},o)$ in $C^{\infty}$
sense, where $B(o,\frac{\pi}{2})\subset R^{n}$ equipped with metric
$\tilde{g}(t)$.
\end{lem}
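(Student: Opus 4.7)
The plan is to follow Hamilton's Cheeger--Gromov compactness strategy adapted to the Ricci--Harmonic flow, where pulling back by the exponential map replaces the usual injectivity radius hypothesis. Because $|Rm(g_{i}(t))|\leq 1$, Rauch comparison ensures there are no conjugate points in $B(o_{i},\pi/2)\subset T_{x_{i}}M_{i}$, so each $\psi_{i}=\exp_{x_{i},g_{i}(0)}$ is a local diffeomorphism there, and the pulled-back metrics $\tilde g_{i}(t)$ and maps $\tilde\phi_{i}(t)$ are well-defined smooth objects on a fixed domain in $\mathbb{R}^{n}$. The goal is to obtain uniform $C^{k}$-bounds on both $\tilde g_{i}$ and $\tilde\phi_{i}$, uniformly in $t\in[0,T]$, and then apply Arzel\`a--Ascoli to pass to a subsequential limit, finally checking that the limit still satisfies the Ricci--Harmonic flow.

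The first step is to establish Shi-type estimates for the Ricci--Harmonic flow: the bound $|Rm|\leq 1$, together with the evolution equations of Müller, yields interior bounds $|\nabla^{k}Rm|+|\nabla^{k+1}\phi|\leq C_{k}(t)$ on positive time intervals, where the constants depend only on $k$, $n$, $T$, and the initial sup of $|\phi|$ (the $L^{\infty}$ bound on $\phi_{i}(\cdot,0)$ propagates by the maximum principle for $\tau_{g(t)}\phi$, while the gradient of $\phi$ is controlled as in Lemma \ref{Gradient estimate} applied on each $M_{i}$). Second, at the initial time $t=0$ the pulled-back metric $\tilde g_{i}(0)$ satisfies the standard Jacobi field estimates in geodesic normal coordinates: the curvature bound controls all covariant derivatives of $\tilde g_{i}(0)$ uniformly on $B(o_{i},\pi/2)$, so $\tilde g_{i}(0)$ is uniformly equivalent to the Euclidean metric and uniformly bounded in $C^{\infty}$ on compact subsets of $B(o,\pi/2)$. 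Third, the Ricci--Harmonic flow equation, together with the Shi-type bounds, upgrades this to uniform $C^{\infty}$-bounds of $\tilde g_{i}(t)$ and $\tilde\phi_{i}(t)$ on $B(o,\pi/2)\times [0,T]$ (using interior parabolic regularity and bootstrapping through the evolution $\partial_{t}\tilde g_{ij}=-2\tilde R_{ij}+\alpha\tilde\phi_{i}\tilde\phi_{j}$ and $\partial_{t}\tilde\phi=\tau_{\tilde g(t)}\tilde\phi$).

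With these uniform bounds in hand, a standard diagonal Arzel\`a--Ascoli extraction produces a subsequence with $\tilde g_{i}(t)\to\tilde g(t)$ and $\tilde\phi_{i}(t)\to\tilde\phi(t)$ in $C^{\infty}_{\text{loc}}(B(o,\pi/2)\times[0,T])$. Smooth convergence then passes to the evolution equations, so the limit $(\tilde g,\tilde\phi)$ is itself a Ricci--Harmonic flow on $B(o,\pi/2)$ with $\tilde g(0)$ nondegenerate (this uses the uniform two-sided bounds on $\tilde g_{i}(0)$ coming from the Jacobi estimate).

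The main obstacle is the derivation of uniform $C^{\infty}$-bounds on $\tilde g_{i}(0)$ in geodesic normal coordinates purely from the curvature bound, since unlike in Hamilton's setting for Ricci flow we must simultaneously track the auxiliary map $\phi_{i}$; the key observation is that the gradient estimate of Lemma \ref{Gradient estimate} combined with the evolution equation for $|\nabla^{k}\phi|^{2}$ gives the required higher derivative control for $\phi$, so Shi's argument goes through with only cosmetic modifications. Once this technical hurdle is cleared, the rest of the proof is a routine adaptation of Hamilton's compactness theorem.
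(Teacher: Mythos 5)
Your overall architecture (pull back by the exponential map, obtain uniform bounds, extract a limit by Arzel\`a--Ascoli, pass to the limit in the evolution equations) is the right one, and it is essentially the content of the black box that the paper simply cites: the paper's entire proof consists of observing that the pullback preserves $|Rm|\leq 1$, quoting from \cite{LiMa} that $\operatorname{inj}(o_{i},\tilde{g_{i}}(0))\geq \frac{\pi}{2}$, and then invoking the compactness theorem for the coupled flow from \cite{W}, \cite{B.List}. So you are re-deriving the cited theorem rather than using it, which is legitimate, but one step of your derivation is wrong as stated.

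The gap is your second step: a $C^{0}$ bound $|Rm|\leq 1$ at $t=0$ does \emph{not} control ``all covariant derivatives of $\tilde{g_{i}}(0)$'' in geodesic normal coordinates. Jacobi field and Rauch comparison arguments give only two-sided $C^{0}$ bounds on $\tilde{g_{i}}(0)$ (uniform equivalence with the Euclidean metric, and absence of conjugate points); to bound the $k$-th derivatives of the metric in normal coordinates one needs bounds on $|\nabla^{j}Rm|$ for $j\leq k-2$, and no such bounds are available at $t=0$ under the stated hypotheses. They are produced by Shi-type estimates only for $t>0$, with constants degenerating like $t^{-j/2}$ as $t\to 0$, so your argument actually yields $C^{\infty}$ subconvergence only on $B(o,\frac{\pi}{2})\times (0,T]$, not on the closed time interval. (This caveat also afflicts the lemma as stated; it is harmless in the blow-up argument of Section 3 because the rescaled flows live on longer and longer backward time intervals, so every time slice of the limit is an interior time.) A second, smaller issue: you invoke Lemma \ref{Gradient estimate} to control $|\nabla\phi_{i}|$, but the proof of that lemma uses a lower bound on $S(0)=R(0)-\alpha|\nabla\phi(0)|^{2}$, i.e.\ an initial gradient bound, which is not among the hypotheses here; what is needed instead is a Bernstein-type interior estimate (for instance, applying the maximum principle to $t|\nabla\phi|^{2}+\phi^{2}$), which uses only the assumed bound $\sup_{x}|\phi_{i}(x,0)|<C$ and again gives control only for $t>0$.
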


\begin{proof}
Since $\tilde{g_{i}}(t)\doteq\psi^{*}_{i}g_{i}(t)$, we have
$\sup_{B(o_{i},\frac{\pi}{2})\times
[0,T]}|Rm(\tilde{g_{i}}(t))|_{\tilde{g_{i}}(t)}\leq 1$. On
$B(o_{i},\frac{\pi}{2})$, we have $inj(o_{i},\tilde{g_{i}}(0))\geq
\frac{\pi}{2}$ \cite{LiMa}. Then the lemma follows from the
compactness theorem in \cite{W},\cite{B.List}.
\end{proof}

\section{Blow up analysis}

\begin{proof}
By the Maximum principle, we have
$$
\inf_{x\in M}\phi(x,0)\leq \phi(x,t)\leq \sup_{x\in M}\phi(x,0)
$$
for $t\in [0,T)$.
 In the following, we consider the pull back
metrics on tangent spaces. Suppose the Ricci-Harmonic flow blows up
at a finite time $T$. That means there exist sequences
$t_{i}\rightarrow T$ and $p_{i}\in M$ such that
$$
Q_{i}=|Rm|(p_{i},t_{i})\geq C^{-1}\max_{M\times [0,t_{i}]}|Rm|(x,t).
$$
where $C>1$ and $Q_{i}\rightarrow \infty$ as $i\rightarrow \infty$.
By lemma \ref{compacness}, the scaled flow
$(B(o_{i},\frac{\pi}{2C}), g_{i}(t), \phi_{i}(t))$ will converge to
a complete ancient solution
$(B(o,\frac{\pi}{2C}),\overline{g}(t),\overline{\phi})$ of the
Ricci-Harmonic flow, where $g_{i}(t)=Q_{i}g(t_{i}+\frac{t}{Q_{i}}),
\phi_{i}(t)=\phi(t_{i}+\frac{t}{Q_{i}})$. Since $S(t)=R(t)-\alpha
|\nabla\phi(t)|^{2}$ is uniformly bounded,
$S_{i}(t)=\frac{1}{Q_{i}}(R(g(t_{i}+\frac{t}{Q_{i}}))-\alpha |\nabla
\phi|^{2}(g(t_{i}+\frac{t}{Q_{i}})))$. As $i\rightarrow \infty$,
$S_{i}(t)\rightarrow 0$. Our ancient solution
$(B(o,\frac{\pi}{2C}),\overline{g}(t),\overline{\phi})$ has $S=0$.
The evolution of $S$ is
$$
(\partial_{t}-\Delta)S=2|S_{ij}|^{2}+2\alpha|\Delta\phi|^{2}.
$$
This imply that $|S_{ij}|=0,|\Delta\phi|=0$. i.e. $Ric=\alpha\nabla
\phi\bigotimes\nabla \phi\geq 0$.

In the following, we will analyze the volume of the limit
$(B(o,\frac{\pi}{2C}),\overline{g}(t),\overline{\phi})$.

Take $r>0$, since the convergence is $C^{\infty}$, the volume is
convergence,
$$
\frac{Vol B(p,r)}{r^{n}}=\lim_{i\rightarrow \infty}\frac{Vol_{i}
B_{i}(p_{i},r)}{r^{n}}
$$
where the volume and the ball $B(p,r)$ on the LHS are considered in
metric $\overline{g}$, while the RHS are considered in metric
$g_{i}(0)=Q_{i}g(t_{i})$. So

$$
\frac{Vol B(p,r)}{r^{n}}=\lim_{i\rightarrow
\infty}\frac{Vol_{g(t_{i})} B_{g(t_{i})}(p_{i},r
Q_{i}^{-1/2})}{(Q_{i}^{-1/2}r)^{n}}
$$

For $\forall \epsilon>0$, there exists $N>0$, such that if $i>N$, we
have
$$
\frac{Vol B(p,r)}{r^{n}}\geq \frac{Vol_{g(t_{i})}
B_{g(t_{i})}(p_{i},r
Q_{i}^{-1/2})}{(Q_{i}^{-1/2}r)^{n}}-\frac{\epsilon}{2}.
$$

For such $\epsilon>0$, there exists a $\delta>0$ from lemma
\ref{volume estimate}. We take $t_{0}>T-\delta$. For $i$
sufficiently large, $0<t_{i}-t_{0}<\delta$,

$$
\frac{Vol B(p,r)}{r^{n}}\geq  (1-\epsilon)^{n/2}\frac{Vol_{g(t_{0})}
B_{g(t_{0})}(p_{i},\frac{1}{1+\epsilon}r
Q_{i}^{-1/2})}{(Q_{i}^{-1/2}r)^{n}}-\frac{\epsilon}{2}.
$$

Note that volume has the expansion within the injective radius of
$p$(see \cite{Gallot} theorem 3.98),
$$
Vol B(p,r)=\omega_{n}r^{n}(1-\frac{R(p)}{6(n+2)}r^{2}+o(r^{2})).
$$

Let $i\rightarrow \infty$, we have
$$
\frac{Vol B(p,r)}{r^{n}}\geq
\omega_{n}\frac{(1-\epsilon)^{n/2}}{(1+\epsilon)^{n}}-\frac{\epsilon}{2}.
$$
Let $\epsilon\rightarrow 0$, we have
$$
\frac{Vol B(p,r)}{r^{n}}\geq \omega_{n}.
$$
Since $Ric\geq 0$, by the Bishop-Gromov volume comparison, we have
$(N,g)$ is a flat manifold, which is contradict to the
$|Rm|(p,0)=1$.

\end{proof}

\begin{proof}
From Theorem 8.6 in \cite{B.List}, since $M$ is compact, the limit
solution is in fact a ancient solution to Ricci flow. The proof is
the similar to the proof in \cite{M-C}.
\end{proof}

\begin{rem}
The blow up can also be applied to the complete noncompact
Riemannian manifold as in \cite{M-C}.
\end{rem}

\end{document}